\theoremstyle{plain}
\newtheorem{theorem}{Theorem}[section]
\newtheorem{lemma}[theorem]{Lemma}
\newtheorem{observation}[theorem]{Observation}
\newtheorem{construction}[theorem]{Construction}
\newtheorem{corollary}[theorem]{Corollary}
\theoremstyle{definition}
\newtheorem{definition}[theorem]{Definition}
\newtheorem{remark}[theorem]{Remark}
\newtheorem{example}[theorem]{Example}
\begin{document}

\title{Automorphisms of Locally Compact Groups, Symbolic Dynamics and the Scale Function}
\author{Bruce~P.~Kitchens}
\address{Department of Mathematical Sciences, Indiana University - Purdue University Indianapolis, 402 N. Blackford Street,
Indianapolis, IN 46202}
\email{bkitchen@iu.edu}

\keywords{automorphisms, locally compact groups, symbolic dynamics}

\begin{abstract} It is shown how to model any automorphism of a totally disconnected, locally compact group by a symbolic dynamical system.  The model is an inverse limit of a product of a
full-shift, on a finite number of symbols, with one of two types of systems.  One is a countable discrete space with a permutation having every point periodic and the other is an essentially wandering, countable state Markov shift.  Some of the ideas used are from dynamics and some from the study of totally disconnected, locally compact groups.  The later ideas concern the scale function and tidy subgroups.  There is a discussion of the connections between those ideas and the dynamical ideas.  It is seen that only the essentially wandering, countable state Markov shift affects the scale function.  Finally, it's shown that transitivity or ergodicity with respect to Haar measure implies that the system has no countable discrete or essentially wandering component.
\end{abstract}

\maketitle

\section{Background and Motivation}  In \cite{K2} it was shown that any expansive automorphism of a second countable, totally disconnected, locally compact topological group is topologically conjugate to a product of a full-shift, on a finite number of symbols, with one of two types of systems.  One is a countable discrete space with a permutation having every point periodic and the other is an essentially wandering, countable state Markov shift.  In the present paper the results, with suitable modifications, are extended to arbitrary automorphisms of such a group.  This results in a symbolic dynamical structure theorem for such automorphisms. Then the connections between the symbolic dynamics ideas and the scale function and tidy subgroups, used in investigating the structure of totally disconnected, locally, compact groups, are discussed.  The scale function and tidy subgroups were introduced and investigated by G.~A.~Willis~\cite{W1}~\cite{W2}.  It is seen that the value of the scale function is determined by the essentially wandering component.  In the last section it is seen what this structure theorem implies when $T$ is transitive or ergodic with respect to Haar measure.

\section{Basic Structure} We are interested in the dynamics of automorphisms of topological groups that are locally compact and totally disconnected.  We abbreviate locally compact, totally disconnected by l.c.t.d.  For the rest of the paper we assume the groups are second countable, Hausdorff and the automorphisms are continuous.  First we state a basic result about l.c.t.d. groups.  It is a slight generalization of van Dantztig's Theorem  and the proof can easily be adapted from the one in \cite{HR} Section 7.

\begin{theorem}\label{CvanD}  Let $\mathcal{G}$ be a totally disconnected, locally compact group, $\mathcal{C}$ a compact subgroup of $\mathcal{G}$ and $U$ a neighborhood of $\mathcal{C}$, then there is a compact, open subgroup $\mathcal{H}$ with $\mathcal{C}\subseteq \mathcal{H} \subseteq U$. \end{theorem}

Any compact, open subgroup, $\mathcal{H}$, defines a coding to a symbolic system.  We will consider $\mathcal{G}/\mathcal{H}$ as the coset space of $\mathcal{G}$ and also as the countable discrete alphabet for a symbolic system.  Let $(\mathcal{G}/\mathcal{H})^\mathbb{Z}$ have the product topology.  Note that when $\mathcal{G}/\mathcal{H}$ is finite $(\mathcal{G}/\mathcal{H})^\mathbb{Z}$ is compact but when $\mathcal{G}/\mathcal{H}$ is infinite $(\mathcal{G}/\mathcal{H})^\mathbb{Z}$ is not even locally compact. There is a coding map, $\pi$, that takes $\mathcal{G}$ into $(\mathcal{G}/\mathcal{H})^\mathbb{Z}$ by assigning to every point in $\mathcal{G}$ its coset itinerary under $T$.  Let $X_{\mathcal{G}/\mathcal{H}} \subseteq (\mathcal{G}/\mathcal{H})^\mathbb{Z}$ denote $\pi(\mathcal{G})$.  Assume the standard symbolic dynamics definitions and notations for follower and predecessor sets for symbols and words occurring in $X_{\mathcal{G}/\mathcal{H}}$ (see for example \cite{K3}). The follower and predecessor sets of a coset in $\mathcal{G}$ are
\[
f(x\mathcal{H}) = \bigcup_{\{ y\/\mathcal{H}: T(x\mathcal{H}) \cap y\mathcal{H} \neq \phi\}}y\mathcal{H} \quad \text{and} \quad
p(x\mathcal{H}) = \bigcup_{\{ y\/\mathcal{H}: T^{-1}(x\mathcal{H}) \cap y\mathcal{H} \neq \phi\}}y\mathcal{H}.
\]
Since each coset is compact and open the cardinality of $f(\mathcal{H})$ is finite. Because all symbols correspond to cosets of $\mathcal{H}$ and all images under $T$ in $\mathcal{G}$ are cosets of $T(\mathcal{H})$ the cardinality of the follower set of any symbol is the same of the cardinality of $f(\mathcal{H})$.  The same applies to the predecessor and follower sets of the words of equal length from $X_{\mathcal{G}/\mathcal{H}}$.  All the cardinalities of the follower sets are bounded by the cardinality of $f(\mathcal{H})$ and all the cardinalities of the predecessor sets are bounded by the cardinality of $p(\mathcal{H})$.  This is the same as saying $X_{\mathcal{G}/\mathcal{H}}$ is locally compact.  Moreover, since the cardinality of $f(\mathcal{H}^{n+1})$, where $\mathcal{H}^{n+1}$ is the word of $n+1$ $\mathcal{H}$'s, is less than or equal to the cardinality of $f(\mathcal{H}^n)$ there is an $N$ where the cardinality stops decreasing.  Taking the standard higher block presentation produces a one-step, countable state Markov shift.  This can be formulated as follows.

\begin{observation}\label{countable}  Let $\mathcal{G}$ be an l.c.t.d. group and $T$ an automorphism.  If $\mathcal{H}$ is a compact, open subgroup of $\mathcal{G}$ then there is an $N \in \mathbb{N}$ so that the coding map using the compact, open subgroup
\[
\bigcap_{i = -N}^N T^{-i} (\mathcal{H})
\]
produces a factor which is a one-step, locally compact, countable state Markov shift.
\end{observation}

\begin{definition}\label{memoryless}  Let $\mathcal{G}$ be an l.c.t.d. group and $T$ an automorphism.  If $\mathcal{H}$ is a compact, open subgroup of $\mathcal{G}$ whose coding map produces a factor which is a one-step, locally compact, countable state Markov shift we say the partition produced by $\mathcal{H}$ is \textit{memoryless}.
\end{definition}

\begin{definition}\label{Markov} Let $\mathcal{G}$ be an l.c.t.d. group and $T$ an automorphism.  Let $\mathcal{H}$ be a compact, open subgroup of $\mathcal{G}$ which produces a partition that is memoryless. Denote by $(\Sigma_{\mathcal{G}/\mathcal{H}}, \sigma )$ the Markov shift produced by the coding map. \end{definition}

\begin{definition}\label{infinity} Let $\mathcal{G}$ be an l.c.t.d. group and $T$ an automorphism.  Let $\mathcal{H}$ be a compact, open subgroup of $\mathcal{G}$ which produces a partition that is memoryless.
Define the compact subgroup
\[
\mathcal{H}^\infty = \bigcap_{j \in \mathbb{Z}} T^j (\mathcal{H}).
\]
$\mathcal{H}^\infty$ consists of the points whose itinerary is all $\mathcal{H}$.  All points in each coset of $\mathcal{H}^\infty$are collapsed to a single point by the coding map.
\end{definition}

\begin{definition}\label{Mpartition} Let $\mathcal{G}$ be an l.c.t.d. group and $T$ an automorphism.  Let $\mathcal{H}$ be a compact, open subgroup of $\mathcal{G}$ which produces a partition that is memoryless. If $\mathcal{H}^\infty = \{e\}$ then the coset partition \textit{separates points} and the partition is a \textit{Markov partition}. \end{definition}

\begin{definition} Let $T$ be an automorphism of the l.c.t.d. group $\mathcal{G}$ and $\mathcal{H}$ be a compact, open subgroup whose coset partition is memoryless.
Define the subgroups
\[
\mathcal{H}_{loc}^s = \bigcap_{j \leq 0} T^j (\mathcal{H}) \qquad \text{and} \qquad \mathcal{H}_{loc}^u = \bigcap_{j \geq 0} T^j (\mathcal{H}).
\]
In dynamical terms these are the \textit{local stable set of the identity} and the \textit{local unstable set of the identity}.
\end{definition}

Note that the cosets $x \mathcal{H}_{loc}^s$ and $x \mathcal{H}_{loc}^u$, for all points $x \in \mathcal{G}$, are dynamically defined.  Each coset of $\mathcal{H}_{loc}^s$ is an equivalence class, in the corresponding coset of
$\mathcal{H}$, defined by the equivalence relation $x \sim_s y$ if and only if $T^j(x)$ and $T^j(y)$ are in the same element of the partition for all $j \geq 0$.  Each coset of $\mathcal{H}_{loc}^u$ is an equivalence class, in the corresponding coset of $\mathcal{H}$, defined by the equivalence relation $x \sim_u y$ if and only if $T^j(x)$ and $T^j(y)$ are in the same element of the partition for all $j \leq 0$.  Also, observe that
\[
\mathcal{H}_{loc}^s \cap \mathcal{H}_{loc}^u = \mathcal{H}^\infty.
\]

The following lemmas will be used in the proofs of the main theorems.  The first is a well known result and is stated without proof.  It and related ideas are discussed in Section 8.

\begin{lemma}\label{subgroup} Let $T$ be an automorphism of the l.c.t.d. group $\mathcal{G}$ and $\mathcal{H}$ be a compact, open subgroup.  Then the following conditions are equivalent.
\begin{enumerate}
\item  The coset partition by $\mathcal{H}$ is memoryless.
\item $\mathcal{H}  = \mathcal{H}^u_{loc} \mathcal{H}^s_{loc}  =\mathcal{H}^s_{loc} \mathcal{H}^u_{loc}$.
\item  $T(x \mathcal{H}^s_{loc}) \subseteq T(x) \mathcal{H}^s_{loc}$ and $T(x) \mathcal{H}^u_{loc} \subseteq T(x \mathcal{H}^u_{loc})$, for all $x \in \mathcal{G}$.
\end{enumerate}
\end{lemma}

\begin{lemma}\label{ptimesf} Let $T$ be an automorphism of the l.c.t.d. group $\mathcal{G}$ and $\mathcal{H}$ be a compact, open subgroup whose coset partition is memoryless then
\[
T(p(\mathcal{H})) = f(\mathcal{H}) \qquad T^{-1}(f(\mathcal{H})) = p(\mathcal{H})
\]
\[
\begin{array}{lllll}
f(\mathcal{H})& = T(\mathcal{H}) \mathcal{H} & = T(\mathcal{H}^u_{loc}) \mathcal{H}^s_{loc} & =\mathcal{H}^s_{loc} T(\mathcal{H}^u_{loc}) & = \mathcal{H} T(\mathcal{H}) \\
p(\mathcal{H})& = T^{-1}(\mathcal{H}) \mathcal{H} & = T^{-1}(\mathcal{H}^s_{loc}) \mathcal{H}^u_{loc} & =\mathcal{H}^u_{loc}T^{-1}(\mathcal{H}^u_{loc}) & = \mathcal{H} T^{-1}(\mathcal{H}).
\end{array}
\]
In particular, $f(\mathcal{H})$ and $p(\mathcal{H})$ are subgroups of $\mathcal{G}$.
\end{lemma}

\begin{proof} By Lemma~\ref{subgroup}, $f(\mathcal{H}) = T(\mathcal{H}^u_{loc}) \mathcal{H}^s_{loc}$ and $p(\mathcal{H}) = T^{-1}(\mathcal{H}^s_{loc}) \mathcal{H}^u_{loc}$.  Let $x \in p(\mathcal{H})$.
Then $xT^{-1}(\mathcal{H}^s_{loc}) \cap \mathcal{H}^u_{loc} \neq \phi$ by Lemma~\ref{subgroup}.  For $y$ in the intersection, $yT^{-1}(\mathcal{H}^s_{loc}) = xT^{-1}(\mathcal{H}^s_{loc})$, and
$x \in \mathcal{H}^u_{loc}T^{-1}(\mathcal{H}^s_{loc})$.  This means $T(x) \in f(\mathcal{H})$ so $T(p(\mathcal{H})) \subseteq f(\mathcal{H})$.  Likewise, $T^{-1}(f( \mathcal{H})) \subseteq p(\mathcal{H})$, which means
$T(p(\mathcal{H})) = f(\mathcal{H})$ and $T^{-1}(f( \mathcal{H})) = p(\mathcal{H})$.

The rest of the statements follow from this. \end{proof}

\section{Essentially Wandering Systems}

\begin{definition}\label{ewand} Let $X$ be an infinite topological space and $T$ a homeomorphism from $X$ to itself.  The system $(X,T)$ is \textit{essentially wandering} if:
\begin{enumerate}
\item the number of points in $X$ with a compact $T$-orbit closure is countable and each point is periodic;
\item it contains points with a noncompact orbit closure.
\end{enumerate}
\end{definition}

\begin{example}\label{1101} Let $\mathcal{G} = \mathbb{Z}^2$ and $T$ be defined by the matrix
\[
\left[ \begin{array}{rr} 1 & 1 \\ 0 & 1 \end{array} \right].
\]
Every point $(0,n)$ for $n \in \mathbb{Z}$ is a fixed point for $T$ but every other point has a noncompact orbit closure.
\end{example}

\begin{example}\label{3adic} Consider the 3-adic numbers, $\mathbb{Q}_3$, expressed as the subset of $\{0,1,2\}^\mathbb{Z}$ consisting of the sequences that are eventually all zeros to the left. A group operation is defined by coordinate by coordinate addition in base 3 with a carry to the right.  The automorphism, $\sigma^{-1}$, is multiplication by 3 which is the inverse of the usual shift on the sequences.  If we choose for $\mathcal{H}$ the subgroup of sequences that are zero for all coordinates to the left of the time zero entry, we produce the essentially wandering, countable state Markov shift defined by the transition graph below where $\bar{0}$ means zeros from that coordinate to the left.


\begin{figure}[h]
\scalebox{0.5} {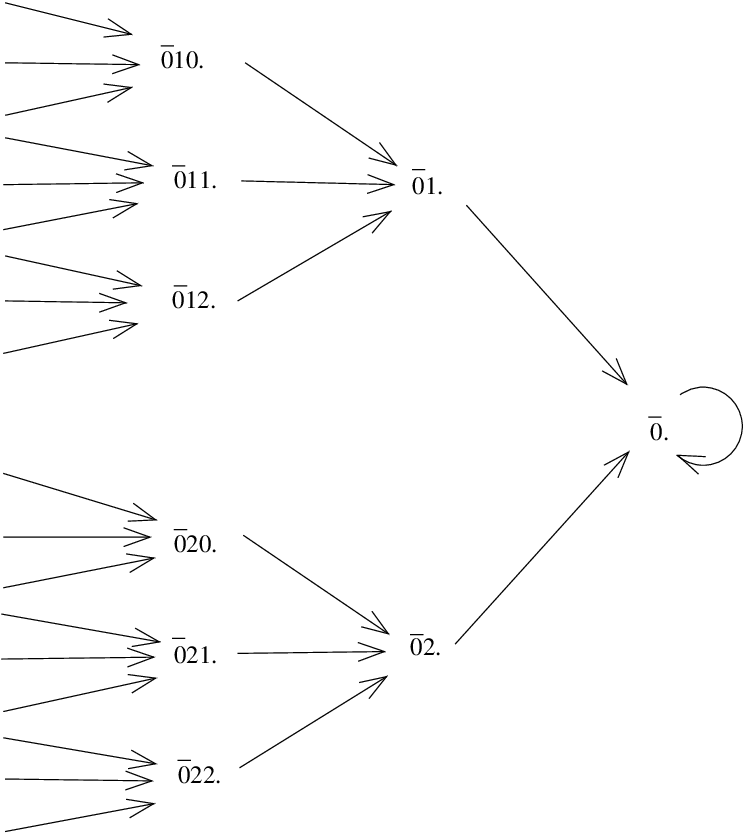}
\end{figure}

\end{example}

\begin{example}\label{3adicsq} Consider $\mathbb{Q}_3 \oplus \mathbb{Q}_3$ the direct sum of two copies of the 3-adic numbers.  Express the entries in each copy of $\mathbb{Q}_3$ as in Example~\ref{3adic}.  The automorphism is multiplication by 1/3 in the first copy of $\mathbb{Q}_3$ and multiplication by 3 in the second copy.  Take $\mathcal{H}$ to be the subgroup $\{ ( \bar{0}. , \bar{0}. ) \}$ in the previous notation.  This gives the essentially wandering, countable state Markov shift defined by the transition graph below.


\begin{figure}[h]
\scalebox{0.5} {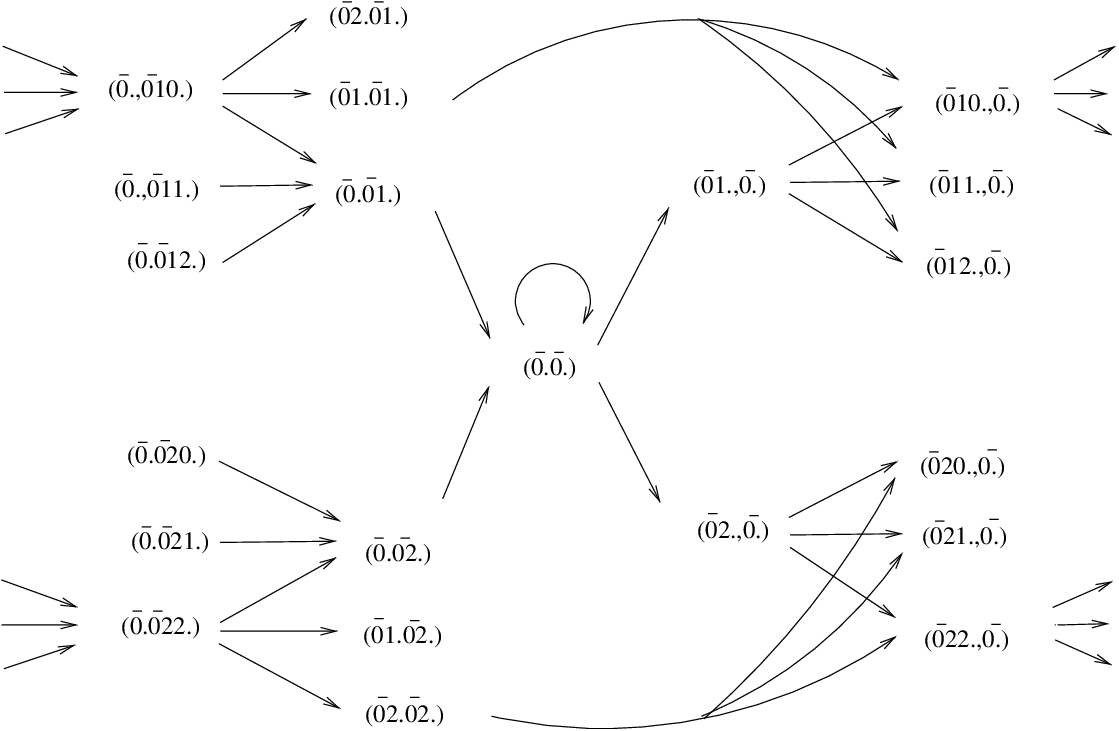}
\end{figure}

\end{example}

\begin{observation}\label{tree}  Let $X$ be an essentially wandering countable state Markov shift.  If $x\mathcal{H}$ and $y\mathcal{H}$ contain distinct periodic orbits there is no point in $x\mathcal{H}$ whose orbit
intersects $y\mathcal{H}$.
\end{observation}

\begin{proof} Since $X$ is a Markov shift, if this were not true there would be points with a compact orbit closure that are not periodic in $X$.
\end{proof}

\section{Dense Compact Orbit Closures}

The opposite end of the spectrum from essentially wandering systems are the ones where the points with a compact $T$-orbit closure are dense in $\mathcal{G}$.  Some examples follow.

\begin{example}\label{gpshift} Let $G$ be a finite  group with the discrete topology.  The product space, $G^\mathbb{Z}$, with the product topology and group multiplication defined coordinate by coordinate is a compact, totally disconnected group.  The shift transformation, $\sigma$, defined by $\sigma(x)_i=x_{i+1}$ is a group automorphism.
\end{example}

\begin{example}\label{01-10} Let $\mathcal{G} = \mathbb{Z}^2$ and $T$ be defined by the matrix
\[
\left[ \begin{array}{rr} 0 & 1 \\ -1 & 0 \end{array} \right]
\]
which is rotation by $\pi/2$.  Every point is periodic with $(0,0)$ a fixed point and all others of period four.

\end{example}

\begin{example}\label{dlim} Let $\mathcal{G}$ be the direct sum of the groups $\mathbb{Z}/ 3^n \mathbb{Z}$, for $n \in \mathbb{N}$,
\[
\mathcal{G} = \bigoplus_{n \in \mathbb{N}}  \mathbb{Z}/ 3^n \mathbb{Z}.
\]
The space is the subgroup of $\prod_{n \in \mathbb{N}} \mathbb{Z}/ 3^n \mathbb{Z}$ where all sequences $x \in \mathcal{G}$ have $x_n =0$ for all but finitely many $n \in \mathbb{N}$.  The group is countably infinite and we put the discrete topology on it.  Let $T$ be the automorphism defined using multiplication by $2$ on each $\mathbb{Z}/ 3^n \mathbb{Z}$.  Every point is periodic but there are points of arbitrarily high period.
\end{example}

\begin{example}\label{cptocl} Let $S_3$ be the permutation group of a set containing three elements.   Form the full-shift $((S_3)^\mathbb{Z}, \sigma)$ and let $(\mathcal{G},T)$ be the system from Example~\ref{dlim}.  Form the direct sum system $( (S_3)^\mathbb{Z} \oplus \mathcal{G}, \sigma \times T)$.  Now, $(S_3)^\mathbb{Z} \oplus \mathcal{G}$ is a nonabelian l.c.t.d. group, $\sigma \times T$ is an expansive automorphism and the periodic points are dense in the group.
\end{example}

\begin{example}\label{compact}  Define $\Sigma_A \subseteq ( \mathbb{Z} / 4 \mathbb{Z} \oplus \mathbb{Z} / 2 \mathbb{Z})^\mathbb{Z}$ by the transition graph below.

\begin{center}
\includegraphics[width=1in]{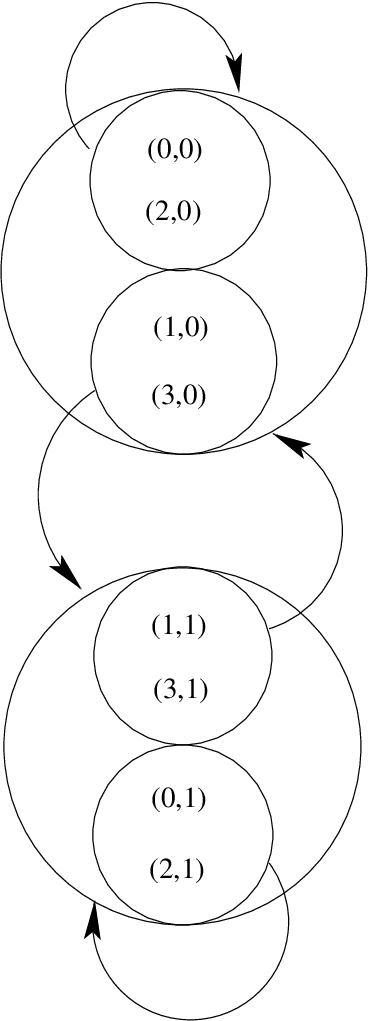}
\end{center}


This means
\[
\begin{array}{c}
f((0,0)) = f((2,0)) = f((1,1)) = f((3,1)) = \{ (0,0),(2,0),(1,0),(3,0) \} \\
f((1,0)) = f((3,0)) = f((0,1)) = f((2,1)) = \{ (0,1),(2,1),(1,1),(3,1) \}.
\end{array}
\]
In this case the finite state Markov shift is a compact group where the group operation is coordinate by coordinate addition (with no carry).  The subgroup $p(\mathcal{H}) \cap f(\mathcal{H}) = \mathcal{K}$ is $\{ (0,0), (2,0) \}$.  The Markov shift $(\Sigma_A, \sigma)$ is topologically conjugate to the full-shift on four symbols but it is not algebraically conjugate to either
$((\mathbb{Z}/ 2\mathbb{Z} \oplus \mathbb{Z}/2\mathbb{Z})^\mathbb{Z}, \sigma)$ or $((\mathbb{Z}/4\mathbb{Z})^\mathbb{Z}, \sigma)$.
\end{example}

\section{Two Constructions}

Next we describe two constructions that will be used in the proof of Theorem~\ref{theorem1}.  They were originally used in \cite{K1}.

\begin{definition}\label{subgp K} Let $T$ be an automorphism of the l.c.t.d. group $\mathcal{G}$ and $\mathcal{H}$ be a compact, open subgroup whose coset partition is memoryless.  Denote by $\mathcal{K}$ the nonempty, compact, open subset of $\mathcal{G}$ defined by $\mathcal{K} = p(\mathcal{H}) \cap f(\mathcal{H})$.  By Lemma~\ref{ptimesf} $f(\mathcal{H})$ and $p(\mathcal{H})$ are subgroups of $\mathcal{G}$ and so $\mathcal{K}$ is also a subgroup of $\mathcal{G}$.
\end{definition}

The cosets of $\mathcal{H}$ partition $\mathcal{K}$ and all $\mathcal{H}$ cosets in $\mathcal{K}$ have the same follower and the same predecessor sets.  This is also true for each coset of $\mathcal{K}$.  We say the cardinality of $\mathcal{K}$ is the number of cosets of $\mathcal{H}$ it contains.

\vskip 0.2in

\begin{construction}\label{con1} The case when the cardinality of $\mathcal{K}$ is greater than one. \end{construction}

Consider the subgroups of $\mathcal{G}$, $p(\mathcal{H})$, $f(\mathcal{H})$, $\mathcal{K}$ and $\mathcal{H}$.  Observe that the coset partition defined by $\mathcal{K}$ is memoryless, $\mathcal{K}$ is partitioned into cosets of $\mathcal{H}$ and $\mathcal{K}^\infty = \cap _{j \in \mathbb{Z}} T^j (\mathcal{K})$ is partitioned into cosets of $\mathcal{H} \cap \mathcal{K}^\infty$.  $T$ restricted to the subgroup $\mathcal{K}^\infty$, using the coset partition defined by $\mathcal{H} \cap \mathcal{K}^\infty$, codes to a full shift on $|\mathcal{K}$/$\mathcal{H}|$ elements.

Build two new systems, $\Sigma_{\mathcal{G}/\mathcal{K}}$ and $(\mathcal{K}/\mathcal{H})^\mathbb{Z}$.  The space $\Sigma_{\mathcal{G}/\mathcal{K}}$ (when not countable) has all the properties of
$\Sigma_{\mathcal{G}/\mathcal{H}}$.  The point is that the systems $\Sigma_{\mathcal{G}/\mathcal{H}}$ and $\Sigma_{\mathcal{G}/\mathcal{H}} \times (\mathcal{K}/\mathcal{H})^\mathbb{Z}$ are topologically conjugate.  The conjugacy is defined on the symbol level.  It is clear because there is the algebraic map $\mathcal{G} / \mathcal{H} \rightarrow \mathcal{G} / \mathcal{K}$ with ``kernel" $\mathcal{K} / \mathcal{H}$.  If $\mathcal{H}$ and $\mathcal{K}$ are normal subgroups of $\mathcal{G}$ this is just saying that $\mathcal{G} / \mathcal{H}$ is an extension of $\mathcal{K} / \mathcal{H}$ by $\mathcal{G} / \mathcal{K}$ so every element of $\mathcal{G} / \mathcal{H}$ can be written as a pair of elements with the first from $\mathcal{G} / \mathcal{K}$ and the second from $\mathcal{K} / \mathcal{H}$. We have reduced the Markov shift $\Sigma_{\mathcal{G}/\mathcal{H}}$ to a product of a new Markov shift cross a full-shift on a finite number of symbols.  In the new Markov shift the cardinalities of the predecessor and follower sets are strictly smaller than in the old one.  The new cardinalities of the predecessor and follower sets is the old cardinality (which was finite) divided by the cardinality of $\mathcal{K}$.  Likewise, the cardinality of the alphabet of $\Sigma_{\mathcal{G}/\mathcal{K}}$ is the cardinality of the alphabet of $\Sigma_{\mathcal{G}/\mathcal{H}}$ divided by the cardinality of $\mathcal{K}$.  If the alphabet of $\Sigma_{\mathcal{G}/\mathcal{H}}$ was infinite the new alphabet is infinite. \hfill $\Box$

\vskip 0.2in

\begin{construction}\label{con2} The case when the cardinality of $\mathcal{K}$ is one. \end{construction}

Consider the subgroups of $\mathcal{G}$, $p(\mathcal{H})$ and $f(\mathcal{H})$.  Both subgroups contain $\mathcal{H}$.  The important point is that when the cardinality of $\mathcal{K}$ is one every element of
$f(\mathcal{H})$ has a distinct follower set and every element of $p(\mathcal{H})$ has a distinct predecessor set.  To see this assume two elements of $f(\mathcal{H})$ have the same follower set.  Using the fact that $f(\mathcal{H})$ is a subgroup we can assume that one of them is $\mathcal{H}$ and the other is $x\mathcal{H}$.  But then we have both $\mathcal{H}$ and $x\mathcal{H}$ in $\mathcal{K}$.  The same reasoning applies to the predecessor sets.  In turn, each element of any follower set, $f(x\mathcal{H})$, has a unique follower set and each element of any predecessor set, $p(x\mathcal{H})$, has a unique predecessor set.

Both of the subgroups $p(\mathcal{H})$ and $f(\mathcal{H})$ have coset partitions of $\mathcal{G}$ that are memoryless and define one-step Markov shifts $\Sigma_{\mathcal{G}/f(\mathcal{H})}$ and $\Sigma_{\mathcal{G}/p(\mathcal{H})}$.  Each is a factor of $\Sigma_{\mathcal{G}/\mathcal{H}}$ by a map defined on symbols.  Moreover, each factor map is invertible.  The map onto $\Sigma_{\mathcal{G}/f(\mathcal{H})}$ is invertible by a two-block map that looks at the present symbol and one symbol into the future.  This is well-defined because each element of a fixed follower set has a distinct follower set.  Likewise, the map onto $\Sigma_{\mathcal{G}/p(\mathcal{H})}$ is invertible by a two-block map that looks at the present symbol and one symbol into the past. We have shown that $(\Sigma_{\mathcal{G}/\mathcal{H}}, \sigma)$,
$(\Sigma_{\mathcal{G}/p(\mathcal{H})}, \sigma)$ and $(\Sigma_{\mathcal{G}/f(\mathcal{H})}, \sigma)$ are all topologically conjugate.

 The number of distinct follower sets in $\Sigma_{\mathcal{G}/f(\mathcal{H})}$ is the number of distinct follower sets in $\Sigma_{\mathcal{G}/\mathcal{H}}$ divided by the cardinality of $f(\mathcal{H})$ and the number of distinct predecessor sets in $\Sigma_{\mathcal{G}/p(\mathcal{H})}$ is the number of distinct predecessor sets in $\Sigma_{\mathcal{G}/\mathcal{H}}$ divided by the cardinality of $p(\mathcal{H})$.  The cardinality of the alphabet of $\Sigma_{\mathcal{G}/f(\mathcal{H})}$ is the cardinality of the alphabet of $\Sigma_{\mathcal{G}/\mathcal{H}}$ divided by the cardinality of $f(\mathcal{H})$ and the cardinality of the alphabet of $\Sigma_{\mathcal{G}/p(\mathcal{H})}$ is the cardinality of the alphabet of $\Sigma_{\mathcal{G}/\mathcal{H}}$ divided by the cardinality of $p(\mathcal{H})$.  If the alphabet of $\Sigma_{\mathcal{G}/\mathcal{H}}$ was infinite the new alphabets are infinite.  A crucial point is that whether we use follower or predecessor sets to make the reduction the cardinalities of the new predecessor and follower sets, in both cases, are unchanged.  The cardinalities neither increase nor decrease.  \hfill $\Box$

The two constructions can be best understood by applying them to Example~\ref{compact}.  First apply Construction~\ref{con1} and then Construction~\ref{con2} to obtain $(\Sigma_2, \sigma) \times (\Sigma_2, \sigma)$ which is topologically conjugate to $(\Sigma_4, \sigma)$.

\section{Structure Theorems}

\begin{definition}\label{ccomp} Let $T$ be an automorphism of an l.c.t.d. group $\mathcal{G}$.  Define $\mathcal{G}_c$ to be the closure of the points whose $T$-orbit closure is compact in $\mathcal{G}$.  Observe that $\mathcal{G}_c$ is a closed, $T$-invariant subgroup of $\mathcal{G}$.  Denote by $T_c$ the restriction of $T$ to $\mathcal{G}_c$.
\end{definition}

\begin{definition}\label{ccomp} Let $T$ be an automorphism of an l.c.t.d. group $\mathcal{G}$ and $\mathcal{H}$ a subgroup whose coset partition is memoryless.  Define $\mathcal{G}_t$ to be the closure of the points $x \in \mathcal{G}$, where there is an $N \in \mathbb{N}$ and for all $|n| \geq \mathbb{N}$, $T^n(x) \in \mathcal{H}$.  Observe that $\mathcal{G}_t$ is a closed, $T$-invariant subgroup of $\mathcal{G}$.  Denote by $T_t$ the restriction of $T$ to $\mathcal{G}_t$.
\end{definition}

We have $\mathcal{G}_t \subseteq \mathcal{G}_c \subseteq \mathcal{G}$ and each is an l.c.t.d. group with automorphisms $T_t$, $T_c$ and $T$.

\begin{theorem}\label{theorem1}  Let $T$ be an automorphism of the l.c.t.d. group $\mathcal{G}$ and $\mathcal{H}$ be a compact, open subgroup whose coset partition is memoryless.
\begin{enumerate}
\item If the points with compact $T$-orbit closure are dense in $\mathcal{G}$ then $(\Sigma_{\mathcal{G}/\mathcal{H}}, \sigma)$ is topologically conjugate to the product of a full-shift on a finite alphabet and a
permutation of a countable discrete set where every orbit is finite.
\item If the points with compact $T$-orbit closure are not dense in $\mathcal{G}$ then $(\Sigma_{\mathcal{G}/\mathcal{H}}, \sigma)$ is topologically conjugate to the product of a full-shift on a finite alphabet
and an essentially wandering countable state Markov shift.
\end{enumerate}
\end{theorem}

\begin{proof}
\textit{Statement 1.}  $\mathcal{G}_c = \mathcal{G}$.  Use the coding map from the coset partition by $\mathcal{H}$ to produce the countable state Markov shift $(\Sigma_{\mathcal{G}/\mathcal{H}}, \sigma)$.

By Lemma~\ref{ptimesf} we know that $f(\mathcal{H})$ and $p(\mathcal{H})$ are compact, open subgroups of $\mathcal{G}$.  Consider the subgroup $\mathcal{K}$ defined in Definition~\ref{subgp K}.

If the cardinality of $\mathcal{K}$ is greater than one apply Construction~\ref{con1}.  This produces $\Sigma_{\mathcal{G}/\mathcal{K}} \times (\mathcal{K}/\mathcal{H})^\mathbb{Z}$ topologically conjugate to $\Sigma_{\mathcal{G}/\mathcal{H}}$.  We think of factoring out a full-shift on a finite number of symbols.  As observed in Construction~\ref{con1} we are changing the subgroup from $\mathcal{H}$ to $\mathcal{K}$ to obtain the countable state Markov shift $\Sigma_{\mathcal{G}/\mathcal{K}}$ and using the coset partition of $\mathcal{H} \cap \mathcal{K^\infty}$ in $\mathcal{K}^\infty$ to obtain $(\mathcal{K}/\mathcal{H})^\mathbb{Z}$.
We can ignore the full-shift and concentrate on the partition of $\mathcal{G}$ defined by $\mathcal{K}$.  The important point is that in $\Sigma_{\mathcal{G}/\mathcal{K}}$ the cardinality of the predecessor and follower sets are strictly smaller than in $\Sigma_{\mathcal{G}/\mathcal{H}}$.  The new cardinalities of the predecessor and follower sets is the old cardinalities (which was finite) divided by the cardinality of $\mathcal{K}$.

If the cardinality of $\mathcal{K}$ is one, use $p(\mathcal{H})$ or $f(\mathcal{H})$ to define a new partition of $\mathcal{G}$.  This means applying Construction~\ref{con2}, identifying the $\mathcal{H}$ cosets
in $p(\mathcal{H})$ or $f(\mathcal{H})$ to obtain the topologically conjugate countable state Markov shift $\Sigma_{\mathcal{G}/p(\mathcal{H})}$ or $\Sigma_{\mathcal{G}/f(\mathcal{H})}$.  The cardinalities of
follower and predecessor sets are the same in the new Markov shift as in $\Sigma_{\mathcal{G}/\mathcal{H}}$.

Continue to apply Constructions~\ref{con1} or \ref{con2} as appropriate.  Notice that since the cardinalities of $p(\mathcal{H})$ and $f(\mathcal{H})$ were originally finite, Construction~\ref{con1} can be used only a finite number of times.

The cardinalities of $p(\mathcal{H})$ and $f(\mathcal{H})$ will be reduced to one.  To see this, suppose the cardinality of $f(\mathcal{H})$ is greater than one and the cardinality of $\mathcal{K}$ is one.  Then
since the points with compact $T$-orbit closure are dense in $\mathcal{G}$ there is either a periodic point not equal to the identity element or a preperiodic point in $\mathcal{H}$.  Apply Construction~\ref{con2}
repeatedly until the cardinality of $\mathcal{K}$ is greater than one and then apply Construction~\ref{con1} further reducing the cardinality of $f(\mathcal{H})$.  The cardinality of $\mathcal{K}$ will become greater
than one after a finite number of applications of Construction~\ref{con2} because of the periodic point or preperiodic point.  More and more of the orbit is getting absorbed into $\mathcal{H}$  as Construction~\ref{con2}
is applied. Finally, there will be transitions that go from $\mathcal{H}$ to a distinct $g\mathcal{H}$ and then from this $g\mathcal{H}$ back to $\mathcal{H}$ making the cardinality of $\mathcal{K}$ greater than one.

If the cardinality of $p(\mathcal{H})$ is greater than one we make the analogous constructions.

Now we have a countable state Markov shift, whose predecessor and follower sets have cardinality one, cross a full-shift on a finite number of symbols.  A countable state Markov shift where every predecessor and follower set
has cardinality one and where every obit closure is compact consists of a countable number of periodic orbits.

Observe that the full-shift is topologically conjugate to the image under the coding map of $(\mathcal{G}_t, T_t)$.  If we return to $\mathcal{G}$ and form $\mathcal{G}/\mathcal{G}_t$ we have a coset space with an induced transformation whose image under the coding map is topologically conjugate to the Markov shift constructed.  The Markov shift is countable, discrete set and every orbit under the induced permutation is finite.  We have that $(\Sigma_{\mathcal{G}/\mathcal{H}}, \sigma)$ is topologically conjugate to the product of a full-shift on a finite alphabet and a permutation of a countable discrete set where every orbit is finite.
\end{proof}

\begin{corollary}\label{cmodt} Let $T$ be an automorphism of the l.c.t.d. group $\mathcal{G}$ and $\mathcal{H}$ be a compact, open subgroup whose coset partition is memoryless.  Denote by
$\Sigma_{\mathcal{G}_t / \mathcal{H}}$ the image of the subgroup $\mathcal{G}_t$ in $\Sigma_{\mathcal{G}/ \mathcal{H}}$ and $\Sigma_{\mathcal{G}_c / \mathcal{H}}$ the image of the subgroup $\mathcal{G}_c$ in $\Sigma_{\mathcal{G}/ \mathcal{H}}$.  $\Sigma_{\mathcal{G}_t/ \mathcal{H}}$ is compact and $(\Sigma_{\mathcal{G}_t / \mathcal{H}}, \sigma)$ is topologically conjugate to a full-shift on a finite number of symbols.  The
system $(\Sigma_{\mathcal{G}_c / \mathcal{H}}, \sigma)$ is topologically conjugate to
\[
(\Sigma_{\mathcal{G}_t / \mathcal{H}}, \sigma) \times (F, \tau)
\]
where $F$ is a countable set and $\tau$ is a permutation of $F$ where every orbit is finite.
\end{corollary}

\begin{corollary}\label{cptcl} Let $T$ be an automorphism of the l.c.t.d. group $\mathcal{G}$ where the points with a compact $T$-orbit closure are dense in $\mathcal{G}$. If $\mathcal{H}$ is a compact, open subgroup whose coset partition is memoryless, its coding map produces
\[
(\Sigma_{\mathcal{G}_t / \mathcal{H}}, \sigma) \times (F, \tau)
\]
where where every point has a compact $T$-orbit closure.
\end{corollary}

\begin{observation}\label{tint}  Another interpretation of $\mathcal{G}_t$ can be given.  Let $F_t$ be the finite collection of all cosets of $\mathcal{H}$ that contain a periodic point whose
$T$-orbit passes through $\mathcal{H}$.  Define
\[
C_{F_t} = \bigcup_{g\mathcal{H} \in F_t} g\mathcal{H} \quad \text{then} \quad \mathcal{G}_t = \bigcap_{j \in \mathbb{Z}} T^j(C_{F_t}).
\]
\end{observation}

\begin{proof}
\textit{Statement 2.}  Consider $\mathcal{G}/\mathcal{G}_t$ with the quotient topology and the induced automorphism $T^\prime $.  Let $\rho$ denote the quotient map from $\mathcal{G}$ to $\mathcal{G}/\mathcal{G}_t$.
If $E \subseteq \mathcal{G}$ then $\rho^{-1}(\rho(E)) = E \mathcal{G}_t$.  If $U \subseteq \mathcal{G}$ is open then $\rho^{-1}(\rho(U)) = U \mathcal{G}_t$ is open in $\mathcal{G}$ and consequently $\rho(U)$ is open in the quotient topology on $\mathcal{G}/\mathcal{G}_t$.  This means $\rho$ is an open map and $\mathcal{G}/\mathcal{G}_t$ is a second countable, Hausdorff, locally compact and totally disconnected coset space.

Consider the compact, open set $C_{F_t}$ defined in Observation~\ref{tint}.  It contains $\mathcal{G}_t$ and by applying Theorem~\ref{CvanD} and Observation~\ref{countable} we can produce a compact, open subgroup
$\mathcal{U} \subseteq \mathcal{G}$, whose coset partition is memoryless and with $\mathcal{G}_t \subseteq \mathcal{U} \subseteq C_t$.  By construction
\[
\bigcap_{j \in \mathbb{Z}} T^j (\mathcal{U}) = \mathcal{G}_t
\]
so $( \Sigma_{\mathcal{G}/\mathcal{U}}, \sigma)$ and $(\mathcal{G}/\mathcal{G}_t , T^\prime)$ are topologically conjugate.

By Corollary~\ref{cmodt} the coset space $\mathcal{G}_c/ \mathcal{G}_t$ consists of a countable number of periodic points and is a $T^\prime$-invariant subset of the coset space $\mathcal{G}/ \mathcal{G}_t$.
A point $x \mathcal{G}_t \in \mathcal{G}/ \mathcal{G}_t$ which has a compact orbit closure has preimages in $\mathcal{G}$ with compact orbit closure since $\mathcal{G}_t$ is compact.  Therefore, it is in $\mathcal{G}_c/ \mathcal{G}_t$ and so
$( \mathcal{G}/ \mathcal{G}_t, T^\prime)$ is essentially wandering.  Together with Corollary~\ref{cmodt}, this proves (2).
\end{proof}

\begin{remark}\label{11101b}  Consider Example~\ref{1101}.  In the example $\mathcal{G}_t = \{ (0,0) \}$, $\mathcal{G}_c = \{ (0,n): n \in \mathbb{Z} \}$.  $\mathcal{G}/\mathcal{G}_c$ is isomorphic to $\mathbb{Z}$ where the induced map is the identity.  $(\mathcal{G},T)$ cannot be reduced to a product of $(\mathcal{G}_c, T_c)$ and another system.
\end{remark}

\begin{theorem}\label{theorem2}  Let $T$ be an automorphism of the l.c.t.d. group $\mathcal{G}$.
\begin{enumerate}
\item If the points with compact $T$-orbit closure are dense in $\mathcal{G}$ then $(\mathcal{G}, T)$ is topologically conjugate to an inverse limit of systems that are the product of a full-shift on a
finite alphabet and a permutation of a countable discrete set where every orbit is finite.
\item If the points with compact $T$-orbit closure are not dense in $\mathcal{G}$ then $(\mathcal{G}, T)$ is topologically conjugate to an inverse limit of systems that are the product of a full-shift on a
finite alphabet and an essentially wandering countable state Markov shift.
\end{enumerate}
\end{theorem}

\begin{proof} Choose a nested sequence of compact, open subgroups $\{\mathcal{H}_n \}_{n \in \mathbb{N}}$ such that
\begin{enumerate}
\item the coset partition of each $\mathcal{H}_n$ is memoryless, and
\item $\bigcap_{n \in \mathbb{N}} \mathcal{H}_n = \{e\}$.
\end{enumerate}
Apply Theorem~\ref{theorem1} at each $n$ to produce $(\Sigma_{\mathcal{G}/\mathcal{H}_n}, \sigma)$ which is conjugate to either the product of a full-shift on a finite alphabet and a permutation of a countable
discrete set where every orbit is finite or the product of a full-shift on a finite alphabet and an essentially wandering countable state Markov shift.

The bonding map at each level is a 1-block factor map defined by inclusion of the cosets.
\end{proof}

\section{The Expansive Case}

An automorphism $T$, of an l.c.t.d group $\mathcal{G}$, is \textit{expansive} if there is an open neighborhood, $\mathcal{U}$, of the identity so that for any two distinct points, $x,y \in \mathcal{G}$ there
is an $n \in \mathbb{Z}$ where $T^n(x) \notin T^n(y) \mathcal{U}$.  When an automorphism is expansive the previous results can be strengthened.

\begin{lemma}\label{tnormal}  If $T$ is an expansive an expansive automorphism of an l.c.t.d group $\mathcal{G}$ there is a compact, open subgroup $\mathcal{H}$ whose coset partition separates points and
$\mathcal{G}_t$ is normal in $\mathcal{G}$.
\end{lemma}

\begin{proof}  By Theorem~\ref{CvanD} there is a compact, open subgroup $\mathcal{H}$ so that
\[
\bigcap_{j \i \mathbb{Z}} T^j(\mathcal{H}) = \{e\}.
\]
Then $\mathcal{G}_t$ is the closure of the of the points where there is an $N \in \mathbb{N}$ such that $T^n(x) \in \mathcal{H}$ for all $|n| \geq N$.  Since the group operation is continuous this is a $T$-invariant, closed, normal subgroup of $\mathcal{G}$.
\end{proof}

\begin{theorem}\label{etheorem}  Let $T$ be an expansive automorphism of an l.c.t.d. group $\mathcal{G}$. The system falls into one of the two following categories.
\begin{enumerate}
\item $\mathcal{G}_c = \mathcal{G}$ in which case $(\mathcal{G}, T)$ is topologically conjugate to the product of a full-shift on a finite alphabet and an automorphism of a
countable discrete group where every orbit is finite.
\item $\mathcal{G}_c \neq \mathcal{G}$ in which case $(\mathcal{G}, T)$ is topologically conjugate to the product of a full-shift on a finite alphabet and an essentially wandering countable state Markov shift.
\end{enumerate}
\end{theorem}

\begin{proof}  By Lemma~\ref{tnormal} there is a compact, open, normal subgroup $\mathcal{H}$ which separates points, so that its coset partition is a Markov partition (Definition~\ref{Mpartition}).  Then
the coding map from $(\mathcal{G}, T)$ to $( \Sigma_{\mathcal{G}/\mathcal{H}} , \sigma)$ is a topological conjugacy.
\end{proof}

\begin{corollary}\label{cmodt2} Let $T$ be an expansive automorphism of an l.c.t.d. group $\mathcal{G}$. The subgroup $\mathcal{G}_t$ is compact and the system $(\mathcal{G}_t, T_t )$ is topologically conjugate to a full-shift on a finite number of symbols.  Moreover, $\mathcal{G}_t$ is open in $\mathcal{G}_c$, $\mathcal{G}_c / \mathcal{G}_t$ is a countable, discrete group and every orbit under the induced automorphism is periodic.
\end{corollary}

\begin{corollary}\label{cmodt2} Let $T$ be an expansive automorphism of an l.c.t.d. group $\mathcal{G}$. In the subgroup $\mathcal{G}_c$, every point has a compact $T$-orbit closure.
\end{corollary}

\begin{corollary} Let $T$ be an expansive automorphism of an l.c.t.d. group $\mathcal{G}$. If $T$ has a dense orbit in $\mathcal{G}$ then $\mathcal{G}$ is compact and $(\mathcal{G}, T)$ is topologically conjugate to a full-shift on a finite number of symbols.
\end{corollary}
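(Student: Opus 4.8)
The plan is to derive the corollary from Theorem~\ref{theorem} by showing that transitivity eliminates everything except the full-shift factor. I would begin with the elementary observation that ``having a dense orbit'' is inherited by topological factors: if $\phi\colon(X,T)\to(Y,S)$ is a continuous surjection with $\phi\circ T=S\circ\phi$ and $x\in X$ has dense $T$-orbit, then $\phi(x)$ has dense $S$-orbit. In particular transitivity is a conjugacy invariant and passes to each coordinate factor of a direct product. So if $T$ is transitive, each of the factors appearing in the applicable case of Theorem~\ref{theorem} is transitive.

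Next I would rule out Case~(2). If $\mathcal{G}_c\neq\mathcal{G}$, then $(\mathcal{G},T)$ has a totally wandering countable state Markov shift $(\Sigma,\sigma)$ as a direct factor, so $(\Sigma,\sigma)$ must itself be transitive. A point-transitive, hence topologically transitive, countable state Markov shift has an irreducible transition graph; such a shift is either a single periodic orbit --- which is a finite space, contradicting the requirement in Definition~\ref{twand} that a totally wandering system be infinite --- or else it contains infinitely many periodic points, obtained by concatenating distinct circuits of the graph in all orders (see \cite{K2}). But every periodic point uses only finitely many symbols and so has finite, hence compact, orbit closure, so the second alternative contradicts total wandering. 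Either way Case~(2) is impossible, so $\mathcal{G}_c=\mathcal{G}$ and we are in Case~(1): $(\mathcal{G},T)$ is topologically conjugate to a product $(\mathcal{A}^{\mathbb{Z}},\sigma)\times(Q,\rho)$ with $\mathcal{A}$ finite and $\rho$ a permutation of a countable discrete coset space $Q$ that fixes the subgroup coset $q_0$ and has every other orbit finite.

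Finally I would collapse the permutation factor. Transitivity passes to $(Q,\rho)$, so some $\rho$-orbit is dense in the discrete space $Q$ and therefore equals $Q$; since every $\rho$-orbit is finite, $Q$ is a finite set acted on by $\rho$ as a single orbit, and this single orbit contains the fixed point $q_0$, forcing $Q=\{q_0\}$. Hence $(\mathcal{G},T)$ is topologically conjugate to the full-shift $(\mathcal{A}^{\mathbb{Z}},\sigma)$ on the finite alphabet $\mathcal{A}$; as $\mathcal{A}^{\mathbb{Z}}$ is compact and the conjugacy is a homeomorphism, $\mathcal{G}$ is compact. The only step with real content is the exclusion of Case~(2) --- equivalently, the fact that an infinite transitive countable state Markov shift can never be totally wandering; everything else is bookkeeping with the statement of Theorem~\ref{theorem}.
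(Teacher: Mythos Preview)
Your proposal is correct and is exactly the argument the paper has in mind: the corollary is stated immediately after Theorem~\ref{theorem} with no separate proof, so the intended derivation is precisely to apply the structure theorem and observe that transitivity, being inherited by factors, forces the totally wandering factor to be absent and the permutation factor to be a single point. Your write-up simply makes explicit the routine checks the paper leaves to the reader.
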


\section{The Scale Function}\label{scale}

Several ideas that have been used arose independently in dynamics and in the study of the structure of l.c.t.d. groups.  One was stated in Lemma~\ref{subgroup}.  Below, some of these ideas are discussed.
In the study of l.c.t.d. groups the definitions of the scale function, tidy subgroups and their relationships are due to G.~A.~Willis and can be found in \cite{W1} and \cite{W2}.

Let $T$ be an automorphism of an l.c.t.d. group $\mathcal{G}$.  The number of cosets of $T(\mathcal{H}) \cap \mathcal{H}$ in $T(\mathcal{H})$ is the index of $T(\mathcal{H}) \cap \mathcal{H}$ in $T(\mathcal{H})$ and is denoted by $[T(\mathcal{H}): T(\mathcal{H}) \cap \mathcal{H}]$.  From the dynamics perspective this is the cardinality of the follower set of $\mathcal{H}$.  If $\mathcal{H}$ is compact and open, the index is finite.

\begin{observation}\label{index}
Let $\mathcal{H}$ be a compact, open subgroup in an l.c.t.d. group $\mathcal{G}$ and $T$ an automorphism of $\mathcal{G}$.
The following properties of the subgroup, $\mathcal{H}$, are equivalent.
\begin{enumerate}
\item The coset partition defined by $\mathcal{H}$ is memoryless.
\item The subgroup $\mathcal{H}$ is \textit{tidy above}.
\item $[T(\mathcal{H}_{loc}^u): \mathcal{H}_{loc}^u] = [T(\mathcal{H}): T(\mathcal{H}) \cap \mathcal{H}]$.
\item $\mathcal{H}  = \mathcal{H}^u_{loc} \mathcal{H}^s_{loc}  =\mathcal{H}^s_{loc} \mathcal{H}^u_{loc}$.
\item $\mathcal{H} = \mathcal{H}_{loc}^u (\mathcal{H} \cap T^{-1}(\mathcal{H}))$.
\item $T(x \mathcal{H}^s_{loc}) \subseteq T(x) \mathcal{H}^s_{loc}$ and $T(x) \mathcal{H}^u_{loc} \subseteq T(x \mathcal{H}^u_{loc})$, for all $x \in \mathcal{G}$.
\end{enumerate}
\end{observation}

The definition of tidy above and the equivalence of $(3)$, $(4)$ and $(5)$ are from the work of Willis.  The definition of memoryless partition or, in the expansive case, Markov partition is from dynamics where $(3)$
 is used as the definition and condition $(6)$ is the algebraic version of the geometric condition used when constructing Markov partitions for diffeomorphisms of manifolds.

\begin{definition} The \textit{scale} of $T$ is
\[
s(T) = \min \{ [T( \mathcal{H}): T(\mathcal{H}) \cap \mathcal{H}] \},
\]
where the minimum is taken over all compact, open subgroups $\mathcal{H}$ of $\mathcal{G}$.
A compact, open subgroup that attains the minimum is called \textit{minimizing}.
\end{definition}

The scale of an automorphism of a compact, totally disconnected group is one and the scale of an automorphism of a countable, discrete group is one.

\begin{definition} Let $T$ be an automorphism of the l.c.t.d. group $\mathcal{G}$ and $\mathcal{H}$ be a compact, open subgroup whose coset partition is memoryless.
Define the subgroups
\[
\mathcal{H}^s = \bigcup_{j \leq 0} T^j (\mathcal{H}_{loc}^s) \qquad \text{and} \qquad \mathcal{H}^u = \bigcup_{j \geq 0} T^j (\mathcal{H}_{loc}^u).
\]
In dynamical terms these are the \textit{stable set of the identity} and the \textit{unstable set of the identity}.
\end{definition}
Each is an increasing union of subgroups and so each is a subgroup of $\mathcal{G}$.

\begin{example}\label{3adic2} In Example \ref{3adic} $\mathcal{H}^u$ is the single point of all $\bar{0}$'s while $\mathcal{H}^s$ is the entire group. \end{example}

\begin{example} In Example \ref{3adicsq} $\mathcal{H}^u$ consists of all points that come from paths in the subgraph of the transition graph that is the tree rooted at $( \bar{0}, \bar{0})$ grown with forward transitions including the self-loop at $( \bar{0}, \bar{0})$ and $\mathcal{H}^s$ consists of all points that come from paths in the subgraph of the transition graph that is the tree rooted at $( \bar{0}, \bar{0})$ grown with backward transitions including the self-loop at $( \bar{0}, \bar{0})$.  Each is closed because the complement is easily seen to be open.
\end{example}

\begin{definition}\label{tbelow} A compact, open subgroup $\mathcal{H}$ of $\mathcal{G}$ is \textit{tidy below} if both $\mathcal{H}^s$ and $\mathcal{H}^u$ are closed in $\mathcal{G}$.
\end{definition}

\begin{definition}\label{tidy} A compact, open subgroup $\mathcal{H}$ of $\mathcal{G}$ is \textit{tidy} if it is tidy above and tidy below.
\end{definition}

\begin{theorem}\label{theoremW} A compact, open subgroup $\mathcal{H}$ of $\mathcal{G}$ is minimizing if and only if it is tidy.
\end{theorem}

\begin{example}\label{3adics} In Example \ref{3adic} $s(\sigma) = 1$ and $s(\sigma^{-1}) = 3$.  In the Example \ref{3adicsq} $s(\sigma) = s(\sigma^{-1}) = 3$. \end{example}

\begin{observation}\label{scalec}  Let $T$ be an automorphism of the l.c.t.d. group $\mathcal{G}$.  If the points with compact $T$-orbit closure are dense in $\mathcal{G}$ then $(\Sigma_{\mathcal{G}/\mathcal{H}}, \sigma)$ is topologically conjugate to the product of a full-shift on a finite alphabet and a permutation of a countable discrete set where every orbit is finite and the scale function has value 1.
\end{observation}

\begin{observation}\label{scalew}  Let $T$ be an automorphism of the l.c.t.d. group $\mathcal{G}$.  If the points with compact $T$-orbit closure are not dense in $\mathcal{G}$ then the scale function of $T$  and $T^{-1}$ are the cardinalities of the follower and predecessor sets, respectively, in any of the essentially wandering countable state Markov shifts of Theorem~\ref{theorem2}.
\end{observation}

\begin{proof}  Consider the proof of Theorem ~\ref{theorem1} and the essentially wandering countable state Markov shift produced there.  There is the special element of the alphabet coming from the $\mathcal{H}$ we have constructed to get the product structure.  The cardinalities of the of the follower sets and the predecessor sets in the Markov shift are the same for all elements of the alphabet of the shift.  In the Markov shift, $\mathcal{H}^u$ is a countable state Markov shift described by a tree rooted at the symbol from $\mathcal{H}$.  The symbol from $\mathcal{H}$ has a self loop and the rest of the graph is a tree.  It is a tree by Observation~\ref{tree}.  The points in the countable state Markov shift that arise from following such a tree form a closed set since there is no recurrence.  The inverse image of the full-shift cross the points from the tree is a closed set.  The same holds for $\mathcal{H}^s$.  The compact, normal subgroup which is the inverse image of the full-shift cross the special symbol is a tidy subgroup.
\end{proof}

Note that this implies that the cardinalities of the follower and predecessor sets are the same in every essentailly wandering Markov shift in the inverse limit of Theorem~\ref{theorem2}

\section{Transitivity, Ergodicity and Entropy}

Here we will examine some further consequences of Theorem~\ref{theorem2}.  We start with the following observation.

\begin{observation}\label{transitivefs} Let $T$ be an automorphism of the l.c.t.d. group $\mathcal{G}$.  If $T$ is transitive then $(\mathcal{G}, T)$ is topologically conjugate to $( \Sigma_\mathcal{G}, \sigma_\mathcal{G} )$ where $\Sigma_\mathcal{G}$ is an inverse limit of full shifts on finite alphabets and $\sigma_\mathcal{G}$ is the map induced by the shift transformation on every level.
\end{observation}

\begin{proof}  If $T$ is transitive then at every level of the inverse limit, $\mathcal{G} = \mathcal{G}_c$ and $F$ consists of a single fixed point. \end{proof}

\begin{corollary}\label{transitivec} \cite{A} Let $T$ be an automorphism of the l.c.t.d. group $\mathcal{G}$.  If $T$ is transitive then $\mathcal{G}$ is compact. \end{corollary}

When $T$ is transitive we have $(\Sigma_{N_n}, \sigma_n)$ at each level.  A factor map between two full-shifts is either boundedly finite-to-one or uncountably infinite-to-one on some point.  A boundedly finite-to-one factor map between full-shifts  preserves topological entropy and the full-shifts have the same number of symbols.  If the factor map is infinite-to-one on some point then the topological entropy is decreased by the logarithm of an integer and this integer is the difference in the cardinalities of the alphabets of the two full-shifts.

\begin{example}\label{fullshift} $\Sigma_2$ is a topological group when addition is defined symbol by symbol modulo 2 and $\sigma$ is an automorphism.  Consider the group homomorphism $\varphi : \Sigma_2 \rightarrow \Sigma_2$ defined by $\varphi (x)_i = x_i + x_{i+1} \pmod 2$ for each $i$.  It is a continuous factor map.  The factor map $\varphi$ is two-to-one on each point.  Forming the inverse limit by applying $\varphi$ at every level results in a totally disconnected, compact group, $\Sigma_\mathcal{G}$,  with an automorphism, $\sigma_\mathcal{G}$, that is transitive, nonexpansive and has topological entropy $\log 2$.
\end{example}

\begin{example}\label{infinent} $\Sigma_{2^n}$ is a topological group when addition is defined symbol by symbol modulo $2^n$ and $\sigma_n$ is an automorphism.  Consider the map $\varphi_n : \Sigma_{2^{n+1}} \rightarrow \Sigma_{2^n}$ defined by $\varphi_n (x)_i = x_i \pmod {2^n}$ for each $i$.  As before it is a continuous factor map which is also a group homomorphism.  The factor map $\varphi_n$ is uncountably infinite-to-one on each point.  Forming the inverse limit by applying $\varphi_n$ at each level results in a totally disconnected, compact group, $\Sigma_\mathcal{G}$,  with an automorphism, $\sigma_\mathcal{G}$, that is transitive, nonexpansive and has infinite topological entropy.
\end{example}

\begin{observation}\label{transitivefs} Let $T$ be an automorphism of the l.c.t.d. group $\mathcal{G}$.  If $T$ is ergodic with respect to Haar measure then $(\mathcal{G}, T)$ is topologically conjugate to
$( \Sigma_\mathcal{G}, \sigma_\mathcal{G} )$ where $\Sigma_\mathcal{G}$ is an inverse limit of full shifts on finite alphabets and $\sigma_\mathcal{G}$ is the map induced by the shift transformation on every level.
\end{observation}

\begin{proof}  The same reasoning applies as in the case where $T$ is transitive. \end{proof}

\begin{corollary}\label{transitivec} \cite{A} Let $T$ be an automorphism of the l.c.t.d. group $\mathcal{G}$.  If $T$ is ergodic with respect to Haar measure then $\mathcal{G}$ is compact. \end{corollary}


\begin{thebibliography}{7}

\bibitem[A]{A}
N.~Aoki, ``Dense Orbits of Automorphisms and Compactness of Groups", Topology Appl.\textbf{20} (1985), no. 1, 1-15.

\bibitem[HR]{HR}
E.~Hewitt and K.~Ross, ``Abstract Harmonic Analysis", Academic Press and Springer-Verlag, 1963.

\bibitem[K1]{K1}
B.~P.~Kitchens, \textit{Expansive Dynamics on Zero-dimensional groups}, Ergod. Th. \& Dynam. Sys. (1987) \textbf{7}, 249--261.

\bibitem[K2]{K2}
B.~P.~Kitchens, \textit{Expansive Dynamics on locally compact groups}, Ergod. Th. \& Dynam. Sys. (2021) \textbf{41}, 3768--3779.

\bibitem[K3]{K3}
B.~P.~Kitchens, ``Symbolic Dynamics; One-sided, Two-sided and Countable State Markov Shifts'', Springer-Verlag, Berlin, Heidelberg, 1998.

\bibitem[W1]{W1}
G.~A.~Willis, \textit{The structure of totally disconnected, locally compact groups}, Math. Ann., \textbf{300} (1994), 341-363.

\bibitem[W2]{W2}
G.~A.~Willis, \textit{Further properties of the scale function on a totally disconnected group}, J. Algebra, \textbf{237} (2001), 142-164.

\end{thebibliography}
\end{document}